\definecolor{myblue}{RGB}{10,188,199}
\newcommand{\R}{\mathbb{R}}
\newcommand{\N}{\mathbb{N}}
\newcommand{\probability}{\mathbb{P}}
\def\defeq{\mathrel{\mathop:}=}
\newtheorem{thm}{Theorem}
\newtheorem{definition}[thm]{Definition}
\newtheorem{remark}[thm]{Remark}
\newtheorem{proposition}[thm]{Proposition}
\let\mytagform@=\tagform@
\def\tagform@#1{\maketag@@@{\color{myblue}(#1)}}
\begin{document}
\title{On the Ergodic Control of Ensembles \\ in the Presence of Non-linear Filters}
\author{V. Kungurtsev, J. Marecek, R. Ghosh, R. Shorten}
\maketitle

\begin{abstract}                          
In many sharing-economy applications, as well as in conventional economy applications, one wishes to regulate the behaviour of an ensemble of agents with guarantees on both the regulation of the ensemble in aggregate and the revenue or quality of service associated with each agent.
Previous work [Automatica, Volume 108, 108483] has developed guarantees of unique ergodicity when there are linear filters. Here, we extend the guarantees to systems including non-linear elements, such as non-linear filters. 
\end{abstract}

\section{Introduction}

In an increasing number of sharing-economy applications, one wishes to regulate the stochastic behaviour of an ensemble of agents \cite{marevcek2016r,ErgodicControlAutomatica,8814786,griggs2021unique,marecek2021predictability,Quan2022}. 
For example, in a virtual power plant \cite{marecek2021predictability}, one may wish to regulate an ensemble consisting of distributed generation (rooftop photovoltaic panels) and interruptible loads (charging of electric vehicles), such that the overall load can be kept close to a constant over the imbalance settlement period 
within an electricity market.  Each agent enrolled in such a
demand-response aggregator 
provides -- at best -- probabilistic guarantees on its own response to a signal.
At the same time, the agent would like to have some guarantees on its revenue (from the distributed generation) or quality of service (for interruptible loads, such as charging), conditional on its responses. Many other two-sided markets and multi-sided platforms \cite{griggs2021unique} need to be balanced in the sharing economy, including networks of electric vehicles \cite{crisostomi2014plug,crisostomi2017electric}, ride hailing \cite{griggs2021unique}, and room rentals.
More broadly, much of the reasoning about fairness in artificial intelligence \cite{Quan2022} can be cast in this fashion.

Our work builds on a number of recent results in the area of the use of iterated random functions (IRF, see, e.g. \cite{Diaconis1999}) for modeling the closed loop involving an ensembles of agents competing to acquire or use a shared resource \cite{marevcek2016r,ErgodicControlAutomatica,griggs2021unique,marecek2021predictability}. The motivation for using iterated random functions in such applications is many-fold. First, IRF's have potential to model agent-level behaviour in many man-made systems. For example, IRF's are intuitively appealing for modelling ensembles of rational agents
orchestrated by means of a feedback signal such as a price, and extensions thereof towards rational inattention. Intuitively, it is appealing to consider each agent picking one of pre-defined behaviours at random, conditional on a state.
 Notably, using iterated random functions allows us to deal with systems that are not quite large-scale enough to be considered a fluid, but are sufficiently large-scale to prohibit traditional microscopic modeling. 
 Second, the use of IRF is very convenient from a modelling perspective. Surprisingly strong results exists for IRF's, and characterising the predictable and fair behaviour of shared-economy systems in terms of properties of a unique invariant measure is very natural.
Finally, the use of IRF in a control setting allows for non-trivial insights: in \cite{ErgodicControlAutomatica}, the difficulty in the use of PI control for regulating access in such settings is discussed. In particular, the loss of ergodicity when using controllers with poles on the unit circle is established. 
In \cite{marecek2021predictability}, these phenomena are demonstrated in power-systems applications using standard simulators.
Other uses of IRF in a systems-and-control context include the analysis of switched and hybrid systems \cite{664150,8814786}.

\begin{figure}[t!]
\centering
\includegraphics[width=0.92\columnwidth]{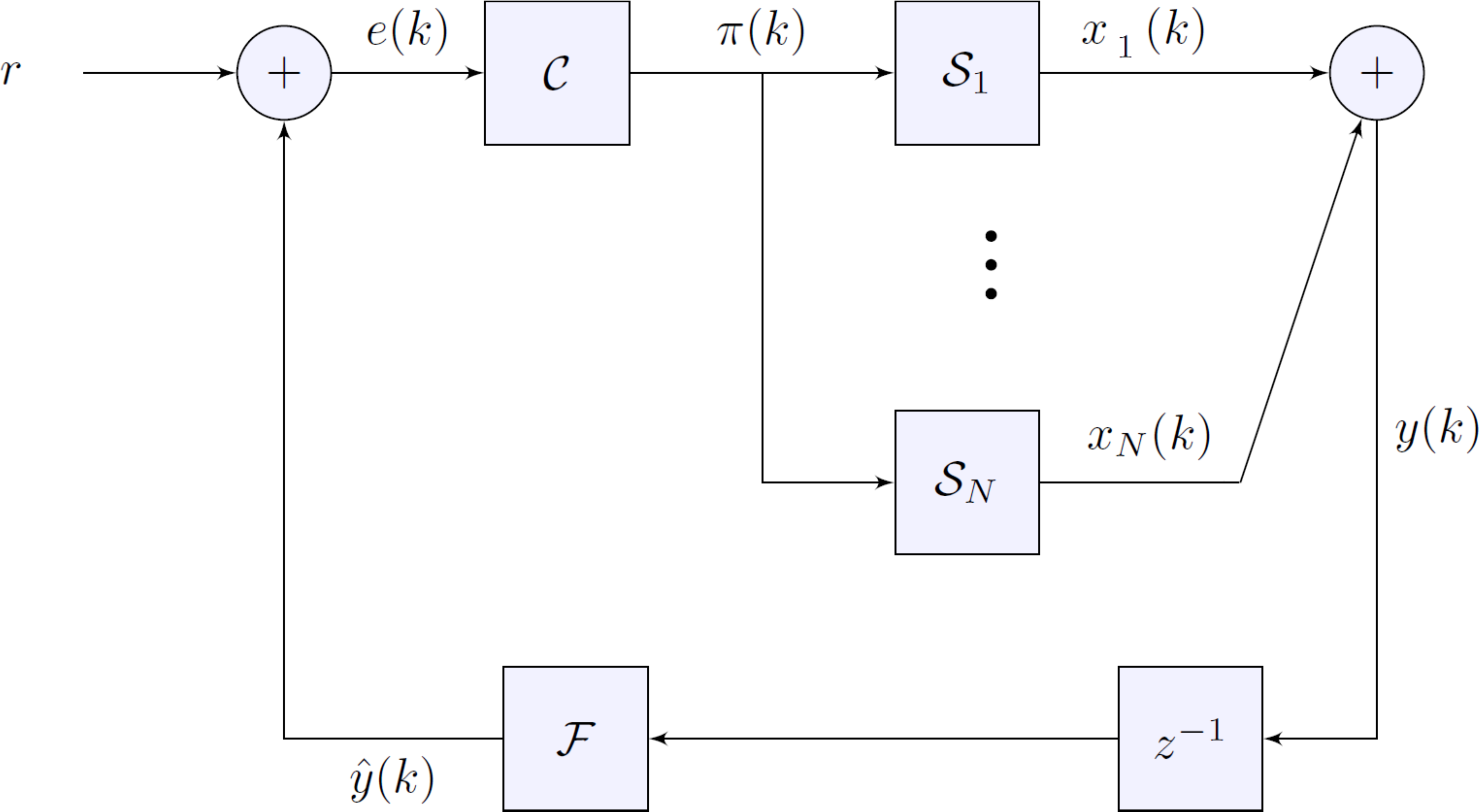}
\caption{A feedback model employed in \cite{ErgodicControlAutomatica} and here. $\mathcal{C}$ denotes a controller \eqref{eq:non-linear-cont}; $\mathcal{F}$ denotes a filter \eqref{eq:non-linear-filter}; $\mathcal{S}_i$ denotes agent $i$. 
}
\label{fig:system}
\end{figure}

The work presented here directly extends our original work in \cite{ErgodicControlAutomatica} by considering classes of \emph{nonlinear} feedback architectures. 
In particular, we develop guarantees for the existence of a unique invariant measure in closed loops consisting of nonlinear controllers of ensembles with nonlinear filters, in contrast to the earlier work which restricted itself to linear controllers. This presents significant methodological challenges. Specifically, the techniques establishing ergodicity must be developed to accommodate the wider range of behavior of nonlinear control systems. A linear system has many properties that are invariant across the control and state space, which is distinctly not the case for nonlinear controllers, whose local and global behavior can be distinct. Formally, ergodicity of IRF is straightforward to establish under iterate-to-iterate contraction guarantees, which naturally coincide with eigenspectrum-associated properties of linear systems.

The techniques establishing ergodicity in this work are founded on appropriately combining the seminal work on stability of nonlinear systems of Lohmiller and Slotine~\cite{Slotine1998} together with tools for establishing ergodicity in IRF, e.g., as found it~\cite{meyn2009markov}. This requires a fundamentally different consideration of the random dynamics. Whereas for linear systems, behavior is averaged at every iteration and the individual distinctions across the functional maps can be abstracted away, for nonlinear systems, we rely on the concept of \emph{canonical trajectories} that represent stable solution paths of each process by itself, and arguments must carefully incorporate how they could interact.


\section{The Closed Loop as a State-Dependent IRF}\label{s:irf}
Let $\mathcal K$  be a closed subset  of $\mathbb R^n$ with the usual Borel
$\sigma$-algebra $\mathcal{B}\left(\mathcal K\right)$ consisting of all possible events. We consider a discrete-time-homogeneous Markov chain on $\mathcal K$ 
with the transition operator $\mathbf{P}$ as follows:
for $x \in \mathcal K$, $\mathcal{G} \in \mathcal{B}\left(\mathcal K\right)$
\begin{align}\label{eq:transi-op}
\mathbf{P}(x,\mathcal{G}) := \mathbb P \left(X_{k+1} \in \mathcal{G} \; \vert \; X_k = x\right).
\end{align}
Conditioned on an initial distribution $\xi$, the random variable $X_k$ is distributed
according to the probability measure $\xi_k$ which is determined inductively by
\begin{equation}\label{eq:measure-iteration}
\xi_{k+1}(\mathcal{G})  := \int_{\mathcal K} \mathbf{P}(x, \mathcal{G}) \, \xi_k\left(\mathrm{d} x\right), \text{for all } \mathcal{G} \in \mathcal{B}\left(\mathcal K\right).
\end{equation}
If there exists a probability measure $\nu$ on $\mathcal K$ which is a fixed point for the iteration scheme described by the equation \eqref{eq:measure-iteration}, then $\nu$ is called
an invariant probability measure with respect to the Markov process $\{ X_k \}_{k\in \mathbb N}$. Thus, $\nu$ is invariant if
\begin{align}\label{eq:inv-prob-meas}
\left(\mathbf{P} \nu\right) \left(\mathcal G\right) = \nu \left(\mathcal{G}\right)\quad \forall \mathcal G\in \mathcal B\left(\mathcal K\right).    
\end{align}
Further,  $\nu$ is called
attractive, if for all probability measure $\mu$ the sequence $\{
\xi_k \}$ defined by \eqref{eq:measure-iteration} with initial
condition $\mu$ converges to $\nu$ in distribution.\newline

We assume there are $w_i \in \N$ state transition maps ${\mathcal W}_{ij}: \R^{n_i} \to \R^{n_i}$, $j=1,\ldots,w_i$
for agent $i$
and $h_i \in \N$ output maps ${\mathcal H}_{i\ell}: \R^{n_i} \to
\mathbb{D}_i$, $\ell= 1,\ldots,h_i$ for each agent $i$, where $\mathbb{D}_i$ is the space defining agent $i$'s behaviors (demands, in the shared economy case).
The evolution of the states and
the corresponding demands then satisfy:
\begin{align}\label{eq:general-dem}
x_i(k+1) & \in  \{  {\mathcal W}_{ij}(x_i(k)) \;\vert\; j = 1, \ldots, w_i\}, \\
y_i(k) & \in  \{ {\mathcal H}_{i\ell}(x_i(k)) \;\vert\; \ell = 1, \ldots, h_i\},
\end{align}
where the choice of agent $i$'s response at time $k$ is governed by probability
functions $p_{ij} : \Pi \to [0,1]$, $j=1,\ldots,w_i$, respectively
$p'_{i\ell} : \Pi \to [0,1]$, $\ell=1,\ldots,h_i$, where $\Pi\subseteq \mathbb{R}^c$ is a set of control signals, which themselves are governed by state dependent function $\pi:\mathcal{X}\to \Pi$. For brevity we use $\pi(x(k))$ and $\pi(k)$ interchangeably and denote $\pi$ as both an element of $\Pi$ and a function whose domain is $\Pi$ depending on context. Specifically, for each agent $i$, we have for all $k\in\N$ that
\begin{subequations} \label{eq:prob-laws}
\begin{align}
&\mathbb{P}\big(x_i(k+1) = {\mathcal W}_{ij}(x_i(k)) \big) = p_{ij}(\pi(k)),\\
&\mathbb{P}\big( y_i(k) = {\mathcal H}_{i\ell}(x_i(k)) \big) = p'_{i\ell}(\pi(k)).
\label{eq:prob-law-c}
\intertext{Additionally, for all $\pi \in \Pi$, $i=1,\ldots,N$ it holds that}
&\sum_{j=1}^{w_i} p_{ij}(\pi) = \sum_{\ell=1}^{h_i} p'_{i\ell}(\pi) = 1.
\end{align}
\end{subequations}
Conditioned on $\{ x_i(k)  \}, \pi(k)$, the random variables $\{ x_i(k+1) \mid  i = 1,\ldots,N \}$ are stochastically independent. The outputs $y_i(k)$  depend on $x_i(k)$ and the
signal $\pi(k)$ only for all $k$. We shall denote the stochastic realization of the selected map as $\xi(k)=\xi(x(k))\in\Xi$.

We can stack the agents' state vectors together to define the filter and control system,
\begin{equation}\label{eq:non-linear-filter}
{\mathcal F} ~:~ \left\{\begin{array}{ccl}
x_f(k+1)  &=& {\mathcal W}_{f}(x_f(k),y(k))\\
\hat y(k) &=& {\mathcal H}_{f}(x_f(k),y(k)),
\end{array} \right.
\end{equation}
\begin{equation}\label{eq:non-linear-cont}
{\mathcal C} ~:~ \left\{ \begin{array}{ccl}
x_c(k+1) &=& {\mathcal W}_{c}(x_c(k),\hat y(k),r) \\
\pi(k) &=& {\mathcal H}_{c}(x_c(k),\hat y(k),r),
\end{array} \right.
\end{equation}
If we denote by $\mathbb{X}_i, i=1,\ldots,N, \mathbb{X}_{\mathcal C}$ and $\mathbb{X}_{\mathcal F}$ the state spaces of the
agents, the controller and the filter, then the system evolves on the
overall state space $\mathbb{X} := \prod_{i=1}^N \mathbb{X}_i \times \mathbb{X}_{\mathcal C} \times \mathbb{X}_{\mathcal F}$
according to the dynamics
\begin{equation}\label{eq:general-dynamics}
x(k+1) := \begin{pmatrix}
(x_i)_{i=1}^N  \\
x_f \\
x_c
\end{pmatrix} (k+1) \in \{ F_m(x(k)) \,\vert\, m \in {\mathbb M}\}.
\end{equation}
where each of the maps $F_m$ is of the form
\begin{equation}\label{eq:F_m-definition}
F_m(x(k)) \defeq  \begin{pmatrix}
( {\mathcal W}_{ij}(x_i (k)) )_{i=1}^N  \\
{\mathcal W}_f(x_f(k), \sum_{i=1}^N  {\mathcal H}_{i\ell}(x_i (k))) \\
{\mathcal W}_c(x_c(k), {\mathcal H}_f(x_f(k), \sum_{i=1}^N  {\mathcal H}_{i\ell}(x_i (k))))
\end{pmatrix}
\end{equation}
and the maps $F_m$ are indexed by indices $m$ from the set
\begin{equation}\label{eq:all-maps-index-set}
\mathbb{M} \defeq \prod_{i=1}^N \{ (i,1), \ldots, (i,w_i) \} \times \prod_{i=1}^N \{ (i,1), \ldots, (i,h_i) \}.
\end{equation}
By the independence assumption on the choice of the transition maps and
output maps for the agents, for each multi-index
$m=((1,j_1),\ldots,(N,j_N),(1,l_1),\ldots,(N,l_N))$ in this set, the
probability of selecting the corresponding function $F_m$ is given by
\begin{multline}\label{eq:prob-chosing-F_m}
\probability \left( x(k+1)=F_m(x(k)) \right) = \\ \left(\prod_{i=1}^N
p_{ij_i}(\pi(k)) \right) \left( \prod_{i=1}^N p'_{il_i}(\pi(k))
\right) =: q_m(\pi(k)).
\end{multline}
Let us denote $\sigma(k,x(0),\{\xi(j)\}_{j=1..k}):\mathbb{N}\times\mathbb{X}\times \Xi^k\to \mathbb{N}$ to indicate that $\sigma(k,x(0),\{\xi_j\})=l$ if the evolution satisfies $x(k+1)=F_l(x(k))$.

Clearly, we have obtained an \emph{iterated random function with state dependent prob
abilities}.

\section{Stability of Nonlinear Systems and Canonical Trajectories}
Each composite map $F_m$ takes $x(k)$ to $x(k+1)$ in a deterministically defined way, i.e., it corresponds to a particular filter and control action given the state $x(k)$. As such it can be associated to a dynamic system in its own right, and we define some classical notions of stability for these systems. In the sequel we shall see that careful agglomeration of the associated quantities measuring and confirming stability for these maps shall endow ergodic properties of the composite IRF. We denote $F_m^{(k)}(x(0))$ to be $k$ repeated applications of $F_m$ starting from $x(0)$.

We present the definitions and propositions necessary for our development. The notions are classical in the literature, e.g.~\cite{Slotine1998} and we leave the reader to consult references for definitions of specific technical notions, such as $\mathcal{L}$ and $\mathcal{K}\mathcal{L}$ functions. In addition, we write $F^{(k)}_m(x) = \phi_m(k,x,u_m(\cdot))$ to indicate that the map $F_m(x)$ physically corresponds to a controlled system $\phi_m$ with control law $u_m(x)$. We adjust the standard definitions accordingly.
A significant property of a deterministic system is asymptotic incremental stability.\newline

\begin{definition}(Uniform Asymptotic Incremental
Stability)~\cite{Slotine1998} The system $F_m$ is uniformly asymptotically incrementally
stable in a positively invariant set $\mathcal X$ if there exists $\beta \in \mathcal K \mathcal L$ such that for any pair of initial states $x_1(0), x_2(0) \in \mathcal X$, for all $k\ge k_0$ for some $k_0>0$, the following holds
\begin{align}
& \|F_m^{(k)}(x_1(0))-F_m^{(k)}(x_2(0))\|\\ &\le \|\phi (k,x_1(0), u(\cdot))-\phi (k,x_2(0), u(\cdot))\|\nonumber\\
&\le \beta (\|x_1(0)-x_2(0)\|, \|u(x_1(0))-u(x_2(0))\|_{\infty}).   
\end{align}
If $\mathcal X= \mathbb R^n$, then the system  $F_m$ is called uniformly globally asymptotically incrementally stable.
\end{definition}
It is common that demonstrating the existence of a Lyapunov function is a technique used to demonstrate a system's stability. Here, we note the underutilized fact that the reverse also holds as well.\newline 

\begin{proposition}\label{prop:lyaptostable}\cite{jiang2002converse}
If system $F_m(x)$ is uniformly globally asymptotically incrementally stable then there exist a smooth function $V_m:\mathbb R^d \to \mathbb R_{\ge 0}$, functions $\alpha_1,\alpha_2\in \mathcal K_{\infty}$ and $\alpha_3: \mathbb R_{\ge 0}\to \mathbb R_{\ge 0}$ positive definite such that there exists $x^{(m)}(k)$ such that
for all $x_1,x_2\in \mathbb R^n$ and trajectories of $F^{(k)}_m(x(0))$ denoted by $x(k)$ the following holds:
\begin{align}\label{cond1}
\alpha_{1}\left(\|x_{1}-x_{2}\|\right)\leq V_m\left( x_{1}- x_{2}\right)\leq \alpha_{2}\left(\|x_{1}-x_{2}\|\right),
\end{align}
\begin{align}\label{cond2}
&V_m\left(F_m(x(k))\right)-V_m\left(x(k)\right)\nonumber\\
&\leq-\alpha_{3}\left(\|F_m(x(k))-x^{(m)}(k)\|\right).
\end{align}
\end{proposition}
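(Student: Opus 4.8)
\emph{Proof strategy.} The statement is a converse Lyapunov theorem: it upgrades the $\mathcal{K}\mathcal{L}$ estimate defining uniform global asymptotic incremental stability to the existence of a smooth Lyapunov function, and the plan is to adapt the discrete-time converse-Lyapunov construction of Jiang and Wang~\cite{jiang2002converse} to incremental (error) coordinates. The first step is to pass from the ``pair of trajectories'' formulation to a $\mathcal{K}\mathcal{L}$ decay estimate for a single error process. I would fix a \emph{canonical trajectory} $x^{(m)}(\cdot)$ of $F_m$ — one is furnished by incremental stability itself (for instance $x^{(m)}(k)=F_m^{(k)}(\bar x)$ for a fixed $\bar x$, or the constant solution at a fixed point when $F_m$ has one) — and, for an arbitrary initial state $x(0)$, set $e(k)\defeq F_m^{(k)}(x(0))-x^{(m)}(k)$. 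Incremental stability gives $\|e(k)\|\le\beta(\|e(0)\|,\cdot)\to 0$, uniformly in the choice of base trajectory, so $e(\cdot)$ evolves under a (possibly time-varying) contractive map.

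Next I would build the candidate $V_m$ by the classical ``sum along the trajectory'' recipe. Using a standard lemma on $\mathcal{K}\mathcal{L}$ estimates, pick $\alpha\in\mathcal{K}_\infty$ small enough near $0$ that $s\mapsto\sum_{k\ge 0}\alpha(\beta(s,k))$ converges and is again a $\mathcal{K}_\infty$ function, and set
\[
V_m(z)\defeq\sum_{k\ge 0}\alpha\big(\|\Psi_m(k;z)\|\big),
\]
where $\Psi_m(k;z)$ denotes the $k$-step error trajectory issued from offset $z$ relative to $x^{(m)}(\cdot)$. The $k=0$ term gives the lower bound with $\alpha_1=\alpha$; the incremental $\mathcal{K}\mathcal{L}$ estimate gives the upper bound with $\alpha_2(s)=\sum_{k\ge 0}\alpha(\beta(s,k))$ (continuity and radial unboundedness following from the choice of $\alpha$); and telescoping the sum along an error trajectory yields $V_m(e(k+1))-V_m(e(k))=-\alpha(\|e(k)\|)$, which, reading \eqref{cond2} with $V_m$ applied to offsets from $x^{(m)}(\cdot)$ as in \eqref{cond1}, is exactly the required decrease with $\alpha_3=\alpha$ (in particular positive definite). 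A $\sup$-based variant $V_m(z)=\sup_{k\ge 0}g(k)\,\alpha(\|\Psi_m(k;z)\|)$ with a slowly growing weight $g$ works equally well if one prefers to avoid the summability estimate.

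The function produced this way is only continuous, so the final step is a routine mollification: on each shell $\{r_1\le\|z\|\le r_2\}$ the function $V_m$ is uniformly continuous, and a partition-of-unity smoothing yields a smooth $\widetilde V_m$ that is $C^0$-close to $V_m$, after which \eqref{cond1}--\eqref{cond2} persist with slightly tightened comparison functions; in discrete time this avoids the vector-field-differentiation subtleties of the continuous-time case. I expect the genuine obstacle to be the first step rather than the construction: reconciling the intrinsically two-trajectory nature of incremental stability with a Lyapunov function of a \emph{single} displacement argument whose decrease is measured along one trajectory relative to $x^{(m)}(\cdot)$. Concretely one must (i) establish existence of a usable canonical trajectory, and (ii) argue that $V_m$ may be taken to depend only on the current offset $x(k)-x^{(m)}(k)$ — which is automatic when $F_m$ has an equilibrium (the offset dynamics are then autonomous, so $\Psi_m$ has the semigroup property used in the telescoping) and otherwise requires first building the Lyapunov function on the extended state $(x,x^{(m)})$ and then projecting; the $\mathcal{K}_\infty$ bookkeeping of the second step is the remaining, purely technical, chore.
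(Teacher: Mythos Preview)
The paper does not supply its own proof of this proposition; it is stated with a direct citation to Jiang and Wang~\cite{jiang2002converse} and used as an imported result, so there is nothing to compare your argument against on the paper's side.

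That said, your sketch is precisely the standard converse-Lyapunov construction from the cited reference, adapted to incremental coordinates: the sum-along-trajectory definition of $V_m$, the choice of $\alpha\in\mathcal{K}_\infty$ so that $\sum_{k\ge 0}\alpha(\beta(s,k))$ converges and furnishes the upper sandwich bound, the telescoping for the decrease condition, and mollification for smoothness. You also correctly isolate the one genuine subtlety that the proposition's notation obscures: $V_m$ is declared as a function of a single vector, yet \eqref{cond1} feeds it the difference $x_1-x_2$ while \eqref{cond2} feeds it the states $x(k)$ and $F_m(x(k))$ directly, so the two conditions are not literally about the same object unless one reads \eqref{cond2} as evaluated on offsets from $x^{(m)}(k)$. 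Your diagnosis---that the offset dynamics are autonomous (and the telescoping legitimate) when $F_m$ has an equilibrium, and that otherwise one must first build the Lyapunov function on the extended state and project---is the correct way to close that gap, and is the only place where real work beyond the cited reference is needed.
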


We shall define $x^{(m)}$ as the \emph{canonical stable trajectory} associated with map $F_m$. 

In consideration of the IRF, we shall also make use of the notion of a stochastic Lyapunov function \cite{meyn1989ergodic}, and the associated condition for ergodicity.\newline
\begin{proposition}\cite{meyn2009markov}
Let $\sigma(k)$ be the stochastic process of transitions for a trajectory of IRF $x(k)$. If there exists a stochastic Lyapunov function $V(k,x(0),\sigma)$ such that,
\[
\mathbb{E}\left[V(k+1,x(0),\sigma)|\mathcal{F}_k\right]-V(k,x(0),\sigma)\le -K<0
\]
then the system is ergodic,
where $\mathcal{F}_k$ is the associated filtration of the IRF.\newline
\end{proposition}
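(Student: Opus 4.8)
The plan is to recognize the displayed inequality as a uniform Foster--Lyapunov negative-drift condition and to deduce ergodicity through positive Harris recurrence in the sense of~\cite{meyn2009markov}. First I would note that, since $V$ is nonnegative, the drift cannot be bounded above by the fixed constant $-K<0$ \emph{everywhere} on $\mathbb{X}$ (otherwise $\mathbb{E}[V(k,x(0),\sigma)]\le V(0,x(0),\sigma)-kK$ would eventually become negative); the hypothesis is therefore to be read with the drift inequality holding outside a compact set $C\subseteq\mathbb{X}$, together with the mild requirement that $V$ is bounded on $C$. In that setting the scalar process $M_k \defeq V(k\wedge\tau_C,x(0),\sigma) + (k\wedge\tau_C)K$ becomes natural, where $\tau_C$ is the first return time of $x(k)$ to $C$.

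Next I would carry out the supermartingale computation: using the tower property together with the drift bound, $\mathbb{E}[M_{k+1}\mid\mathcal{F}_k]\le M_k$ on $\{\tau_C>k\}$, while $M_{k+1}=M_k$ on $\{\tau_C\le k\}$, so $\{M_k\}$ is a nonnegative $\mathcal{F}_k$-supermartingale. Taking expectations, and letting $k\to\infty$ with monotone convergence applied to $(k\wedge\tau_C)K\le M_0$, yields $\mathbb{E}_x[\tau_C]\le V(0,x,\sigma)/K<\infty$; boundedness of $V$ on $C$ then gives $\sup_{x\in C}\mathbb{E}_x[\tau_C]<\infty$, i.e.\ $C$ is a positive recurrent set for the Markov chain $\{X_k\}$.

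To upgrade positive recurrence of $C$ to the stated ergodicity, I would invoke the remaining structure of the IRF of Section~\ref{s:irf}: since the selection probabilities $q_m(\pi)$ are strictly positive and the maps $F_m$ are continuous, the chain is $\psi$-irreducible and aperiodic and the compact set $C$ is petite. The drift-to-a-petite-set criteria of~\cite{meyn2009markov} then give positive Harris recurrence, existence of a unique invariant probability measure $\nu$, and convergence of the iterates $\xi_k$ of~\eqref{eq:measure-iteration} to $\nu$ --- that is, ergodicity and the attractiveness property defined in Section~\ref{s:irf}.

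The main obstacle I anticipate is not the supermartingale manipulation, which is routine, but the reconciliation of the path-dependent object $V(k,x(0),\sigma)$ with the state-space drift machinery of~\cite{meyn2009markov}: one must either lift the analysis to the natural extended chain on $\mathbb{X}\times\Xi^{\mathbb N}$, so that the ``current state'' carries enough information for $V$ to be Markovian, or show that the stochastic Lyapunov function built from the canonical trajectories $x^{(m)}$ and the converse-Lyapunov functions $V_m$ of Proposition~\ref{prop:lyaptostable} may be taken to depend on $x(k)$ alone up to a uniformly bounded correction. Establishing that reduction, and thereby that the petite set $C$ really does absorb the path-dependence, is where the genuine work lies.
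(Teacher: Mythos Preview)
The paper supplies no proof of this proposition: it is stated purely as a citation from \cite{meyn2009markov} and then invoked as a black-box tool inside the proof of Theorem~\ref{thm:unified-stability}. There is therefore no ``paper's own proof'' to compare against.

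On its own merits, your sketch is the standard Foster--Lyapunov route that \cite{meyn2009markov} itself takes, so you have in effect reconstructed the argument behind the citation. Your remark that the drift bound $\le -K<0$ cannot hold on all of $\mathbb{X}$ for a nonnegative $V$, and must be read as holding off a compact (petite) set $C$ with $V$ bounded on $C$, is correct and worth making explicit; the statement in the paper is loose on this point. The supermartingale computation giving $\mathbb{E}_x[\tau_C]\le V(0,x,\sigma)/K$ and the subsequent upgrade through $\psi$-irreducibility, aperiodicity, and petiteness of $C$ are exactly the right ingredients. Your closing caveat is also well placed: the path-dependent form $V(k,x(0),\sigma)$ does not sit directly in the state-space framework of \cite{meyn2009markov}, and the paper does not resolve this tension either --- it simply uses the proposition formally within the proof of Theorem~\ref{thm:unified-stability}.
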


Finally, we will consider contractive systems, which guarantee stability for deterministic processes.
Let $M_n(\mathbb R)$ denote the set of all $n\times n$ matrices with real-entries. Given $P,Q\in M_n(\mathbb R)$ be symmetric, we write $P \preceq Q$ if and only if $x^\top Px \le x^\top Qx\quad \forall x\in \mathbb R^n$.\newline

\begin{definition}(Uniform Contraction)
Assume that $F_m$ is continuously differentiable in $x$ on a positively invariant set $\mathcal{X}\subseteq \mathbb R^n$. Then the system is uniformly contracting in $\mathcal X$ if there exist a non-singular matrix-valued function $\psi_m: \mathbb R^d \times \mathbb R^n\to M_n(\mathbb R)$ and constants $\mu, \eta, \rho\in \mathbb R_{+}$ such that for all $x(k)\in \mathbb R^n$, we have 
\begin{align}\label{def:unicontra1}
\eta I \preceq \psi_m(u(x(k)), x(k))^{\top} \psi_m(u(x(k)), x(k)) \preceq \rho I,
\end{align}
\begin{align}\label{def:unicontra2}
\mathcal M_m(u(x(k)), x(k))^\top \mathcal M_m(u(x(k)), x(k))-I \preceq-\mu I,
\end{align}
where the matrix $\mathcal M_m(u, x)$ is defined as 
\begin{align}\label{contramat}
&\mathcal M_m(u, x)\nonumber\\
&=\psi_m(u(F_m(x)), F_m(x)) \frac{\partial F}{\partial x}(u(x), x) \psi(u(x), x)^{-1}.    
\end{align}
If $\mathcal X=\mathbb R^n$, then we  the system $F_m$ is uniformly globally contracting.
\end{definition}
Uniform global contraction implies global exponential stability~\cite[Theorem 1, Theorem 3]{Slotine1998}.



\section{Main Result}

For a linear controller and filter, we have established \cite{ErgodicControlAutomatica} conditions that assure the existence of a unique invariant measure for the closed loop. This is often referred to as the  
the ``unique ergodicity'' property in the language of \cite{ErgodicControlAutomatica}. 
Let us now consider non-linear controllers and non-linear filters and establish conditions that ensure unique ergodicity of the feedback system in Figure \ref{fig:system}. We present our first Theorem on unique ergodicity as guaranteed by global stability properties of the nonlinear control systems $\{F_m(x)\}$.\newline 

\begin{thm}\label{thm:unified-stability}
Consider the closed loop stochastic feedback system described in Section~\ref{s:irf}. Furthermore, assume:
\begin{itemize}
\item[(i)] we have globally Lipschitz-continuous and continuously differentiable functions ${\mathcal W}_{ij}$ and ${\mathcal H}_{ij}$ such that each $F_m$ is globally Lipschitz with constant $l_m$;\newline 
\item[(ii)] there are  scalars $\delta, \delta' > 0$ such that
$p_{ij}(\pi) \geq  \delta > 0$,
$p'_{ij}(\pi) \geq  \delta' > 0$ for all $\pi\in \Pi$ and all $(i,j)$. Furthermore, for all $\pi\in\Pi$ the Markov matrix induced by $p_{ij}$ is irreducible. \newline 
\item[(iii)] the set $\Pi$ is compact;\newline 
\item[(iv)] for all $m$, $F_m(x(k))$ is Uniformly Globally Asymptotically Stable and satisfies one of the following conditions: \newline 
\begin{itemize}
\item[(a)] $\max_m l_m (2-|\mathbb{M}|\delta)<1$;\newline 
\item[(b)] For all $m$, for all $x,x'\in\mathcal{X}$ it holds that $\|\pi(F_m(x))-\pi(F_m(x'))\|\le \kappa \|x-x'\|$ with $\kappa<1$, i.e., $\pi\circ F_m$ is a contraction.\newline 
\end{itemize}
\end{itemize}
Then, the feedback loop has a unique, attractive invariant measure. In particular, the system is uniquely ergodic.
\end{thm}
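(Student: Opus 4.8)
The plan is to treat the closed loop as an iterated random function with place‑dependent probabilities (in the sense of \cite{Diaconis1999,ErgodicControlAutomatica}) and to prove the three required properties separately: \emph{existence} of an invariant measure via a stochastic‑Lyapunov drift assembled from the per‑map Lyapunov functions, and then \emph{uniqueness} and \emph{attractivity} via a contraction of the one‑step transition $\mu \mapsto \mathbf P\mu$ in a suitable (weighted or truncated) Wasserstein metric, obtained through a coupling of the map‑selection mechanism. Assumptions (i)--(iii) are the structural ingredients that make both steps go through; the two alternatives in (iv) feed into the uniqueness step through two different coupling arguments.

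For existence, I would first apply assumption (iv): each $F_m$ is UGAS, so Proposition~\ref{prop:lyaptostable} supplies a smooth $V_m$, class‑$\mathcal K_\infty$ bounds $\alpha_1^{(m)},\alpha_2^{(m)}$, a positive‑definite $\alpha_3^{(m)}$, and a canonical trajectory $x^{(m)}(\cdot)$ satisfying \eqref{cond1}--\eqref{cond2}. Using compactness of $\Pi$ (iii), the global Lipschitz bounds (i), and finiteness of $\mathbb M$, I would show that the canonical trajectories and the limit sets of the $F_m$ are confined to a common bounded set, and then build a candidate stochastic Lyapunov function for the IRF — e.g.\ $V(k):=V_{\sigma(k-1)}(x(k))$ or a convex combination $\sum_m \theta_m V_m$ — whose one‑step conditional expectation, computed with \eqref{eq:prob-chosing-F_m} and the uniform lower bound $q_m(\pi)\ge \delta^N{\delta'}^N>0$ from (ii), satisfies $\mathbb E[V(k+1)\mid\mathcal F_k]-V(k)\le -K<0$ outside a compact set. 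The stochastic‑Lyapunov criterion quoted above together with a Krylov--Bogolyubov argument on the resulting tight Cesàro averages then produces an invariant measure $\nu$.

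For uniqueness and attractivity I would couple two copies $X_k,X_k'$ started from arbitrary initial laws by the \emph{maximal coupling of the selected index}: given $(X_k,X_k')=(x,x')$, the copies apply the same map with probability $\sum_m\min\{q_m(\pi(x)),q_m(\pi(x'))\}\ge |\mathbb M|\min_m q_m$, which is positive by (ii) — this is exactly where irreducibility and $\delta,\delta'>0$ enter. Under (iv)(a), on the agreement event the separation contracts by at most $\max_m l_m$, while on the disagreement event (probability $\le 1-|\mathbb M|\delta$) the copies jump under distinct maps; bounding $\|F_m(x)-F_{m'}(x')\|$ with the global Lipschitz constants and the confinement of the canonical trajectories from the previous step, the two contributions combine into a one‑step Wasserstein contraction factor $\le \max_m l_m(2-|\mathbb M|\delta)<1$. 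Under (iv)(b) the mechanism is different: since $\pi\circ F_m$ is a $\kappa$‑contraction, the control signals of the coupled copies satisfy $\|\pi(X_{k+1})-\pi(X_{k+1}')\|\le\kappa\|X_k-X_k'\|$, so the discrepancy between the place‑dependent laws $q_\cdot(\pi(X_k))$ and $q_\cdot(\pi(X_k'))$ decays geometrically, the copies eventually select identical maps with probability approaching one, and on that event UGAS of each $F_m$ drives $\|X_k-X_k'\|\to 0$; together with the tightness from the existence step this yields convergence in distribution to a single limit. In either case $\mathbf P$ is an (asymptotic) contraction on the space of probability measures, so the Banach fixed‑point theorem forces $\nu$ to be the unique invariant measure and $\xi_k\to\nu$ for every initial law, i.e.\ unique ergodicity.

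The step I expect to be the main obstacle is controlling the disagreement term in the coupling under (iv)(a): unlike the linear case treated in \cite{ErgodicControlAutomatica}, there is no single coordinate change trivializing all $F_m$ simultaneously, so one must show that when the two copies diverge onto different maps the extra displacement is genuinely dominated by $\max_m l_m(2-|\mathbb M|\delta)\,\|x-x'\|$ plus a bounded term — and then absorb that bounded term, on the unbounded state space $\mathcal K$, by running the contraction in a Lyapunov‑weighted or truncated Wasserstein metric that is simultaneously compatible with the drift estimate of the existence step. Making the interplay of these two estimates precise — in the spirit of Harris‑type theorems — is where the bulk of the technical work lies; once that is in place, the conclusion that the feedback loop in Figure~\ref{fig:system} is uniquely ergodic follows as above.
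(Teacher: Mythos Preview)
Your proposal tracks the paper's proof closely: both assemble the per-map Lyapunov functions from Proposition~\ref{prop:lyaptostable} into a stochastic Lyapunov function, handle case (iv)(a) via a Wasserstein-$2$ coupling producing exactly the contraction factor $\max_m l_m(2-|\mathbb M|\delta)$, and handle case (iv)(b) through the contraction of $\pi\circ F_m$ forcing the control signal to settle. The paper organizes matters slightly differently --- it first freezes the control at a limit point $\pi^\star$ (from compactness of $\Pi$), proves ergodicity of that \emph{fixed}-probability IRF via the stochastic Lyapunov drift, and then reduces the state-dependent system to it (under (iv)(b) by a fixed-point argument sending $\pi(k)\to\pi^\star$; under (iv)(a) by the same Wasserstein triangle-inequality estimate you describe) --- and it asserts the disagreement bound directly as $\max_m l_m(1-|\mathbb M|\delta)\,W_2(X(k),\tilde X(k))$ without the additive constant or Lyapunov-weighted metric you anticipate, but the substance is the same.
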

\begin{proof}
By the compactness of $\Pi$, there exists a limit point
$\pi^{\star}$ of the sequence $\pi(k)$ generated by the IRF. By the irreducibility of $p_{ij}(\pi^{\star})$, it has an invariant distribution $\nu^{\star}$. 
Assumption (iv) and Proposition~\ref{prop:lyaptostable} imply the existence of a set $\{x^{\left(m\right)}(k)\}$ of canonical convergent trajectories together with Lyapunov function sequences $V_m$ satisfying
\begin{align}\label{eq:lyap-diff-cond}
V_m(k+1,F(x(k)))-V_m(k,x(k)) \\ \le -\alpha_m(\|x(k)-x^{\left(m\right)}(k)\|). \notag
\end{align}
Now define the stochastic Lyapunov function \cite{meyn1989ergodic},
\begin{align}
V(k,x,\sigma) = \sum\limits_{m\in\mathbb{M}} \mathbf{1}[\sigma(k,x,\cdot)=m]
V_m(k,x).
\end{align}\label{eq:stoch-lyap-func}
Consider a trajectory for which 
\[
x(k+1)= F_{\tilde\sigma(k,x(k),\{\xi_j\})}(x(k)),
\]
with $\tilde \sigma$ is such that $\pi(k)=\pi^{\star}$ regardless of $x(k)$. It holds that,
\[
\begin{array}{l}
\mathbb{E}\left[V(k+1,x(k+1),\tilde\sigma)|\mathcal{F}_k\right] \\
\quad = \mathbb{E}\left[\sum\limits_{m\in\mathbb{M}} \mathbf{1}[\tilde\sigma(k,x(k))=m]
V_m(k+1,x(k+1))|\mathcal{F}_k\right]
\\ \quad= 
\sum\limits_{m\in\mathbb{M}} p^{\star}_{\tilde\sigma(k-1),m}
V_m(k+1,x(k+1)) \\
\quad \le \sum\limits_{m\in\mathbb{M}} p^{\star}_{\tilde\sigma(k-1),m}
\left(V_m(k,x(k))-\alpha_m(\|x(k)-x^{\left(m\right)}(k)\|)\right).
\end{array}
\]
Taking expectations, recalling the stationary distribution $\nu^*$, we obtain that,
\begin{align*}
\mathbb{E}[V(k,x,\nu)] = \sum\limits_{m\in\mathbb{M}}
\nu^*_m V_m(k,x)    
\end{align*}
and,
\begin{align*}
&\mathbb{E}\left[V(k+1,x(k+1),\tilde\sigma(k))\right]\\
&\le \sum\limits_{m\in\mathbb{M}} \nu^*_m
\left(V_m(k,x(k))-\alpha_m\left(\left\|x(k)-x^{\left(m\right)}(k)\right\|\right)\right)\\
&=\mathbb{E}\left[V(k,x,\tilde\sigma(k-1))\right] - \sum\limits_{m\in\mathbb{M}} \nu_m\alpha_m \left(\left\|x(k)-x^{\left(m\right)}(k)\right\|\right)    
\end{align*}
which implies that $x(k)$ driven by the process $\{F_{\tilde \sigma(k)}(x(k))\}$ is stable in probability and that the IRF is ergodic. 

We now consider the IRF as proceeding with general $\pi(k)$, i.e., not fixed at $\pi^*$. We divide the argument into two cases as depending on which of condition iv above holds.

\textbf{Case (iv-b)}

$\pi\circ F_m$ as a map from $\Pi$ to $\Pi$ is a contraction. Then by Kakutani's fixed point theorem \cite{Kakutani1941}, $\pi_k$ must converge to $\pi^*$ above and thus asymptotically $\sigma$ behaves as $\tilde \sigma$.

\textbf{Case (iv-a)} 
Now consider a contraction argument involving two stochastic process paths of $x(k)$ for the non-linear IRF, $X(k)$ and $\tilde X(k)$, denoting $\tilde x(k+1,\pi(X(k)))$ the stochastic step from $\tilde X(k)$ to $k+1$ governed by the probability mappings of $X(k)$ (i.e., $\pi(X(k))$): 
\[
\begin{array}{l}
W_2 (X(k+1),\tilde X(k+1))\\ 
\le W_2 (X(k+1,\pi(X(k))),\tilde x(k+1,\pi(X(k))))\\ \qquad +W_2 (\tilde x(k+1,\pi(X(k))),\tilde X(k+1,\pi(\tilde X(k)))) \\ 
\le \sum\limits_m \mathbb{E}_{X(k)}l_m \pi_m(X(k))
W_2(X(k),\tilde X(k)) \\ 
\qquad + \max_m l_m (1-|\mathbb{M}|\delta) W_2(X(k),\tilde X(k)) \\
\le  \max_m l_m (2-|\mathbb{M}|\delta) W_2(X(k),\tilde X(k))
\end{array}
\]
where $W_2$ is Wasserstein-2  distance \cite{Villani2009} and the first inequality is just the triangle inequality. For the second inequality, for the first term the same $\pi(X(k))$ governs the probabilistic function selection and so the distances of the next iterate is the weighted, by the probabilities in $\pi(X(k))$, average over the corresponding distances for each of the $F_m$, which are its Lipschitz constants. For the second term, considering the Wasserstein-2 distance is the distance corresponding to the optimal transference plan, recall by the assumptions that only for at most, $1-|\mathbb{M}|\delta$ of the probability mass of $\pi$ can result in a different map $m$ to apply at $k$ for $\pi(X(k))$ compared to $\pi(\tilde X(k))$, and if indeed a different map is chosen the distances of the two outputs are bounded as according to the Lipschitzianness of each $F_m$ with a bound $\max_m l_m$.
The coupling contraction now implies ergodicity.
\end{proof}
It can be observed that the conditions are relatively strong, but they are the first and thus far the weakest, to the best of our knowledge, to establish unique probabilistic convergence in the nonlinear setting and as such can be considered the start of a research program. One of the main considerations for future work is in particular to relax the global Lipschitzianity conditions.

In addition, however, if the individual systems $F_m$ satisfy a uniform stability condition, guarantees could be shown in terms of a boundedness of the trajectory, without quantitative contraction assumptions.\newline 

\begin{thm}\label{thm:Lohmiller-Slotine-typ}
Consider the closed loop feedback system in Figure \ref{fig:system} and formalized in Section~\ref{s:irf}. Assume that:
\begin{itemize}
\item[(i)] There exists a $\beta$, a region $\mathcal{X}$, with $x(0)\in\mathcal{X}$, and, for all $m$, there exists a trajectory $x^{\left(m\right)}(k)$, a positive definite metric $M^{\left(m\right)}=\Theta_m^T \Theta_m$ centered at  $x^{\left(m\right)}(k)$ such that for all $x\in\mathcal{X}$,
it holds that for $G_m=\Theta_m^T \frac{\partial F_m(x)}{\partial x} \Theta_m$, 
\begin{align}\label{eq:stable-trajec}
G_m^T G_m-I\preceq -\beta I \prec 0
\end{align}
\item[(ii)] There exists some $R$ such that
\begin{align}\label{eq:bound-R}
\sup\limits_k \max\limits_{m_1,m_2}\left\|x^{\left(m_1\right)}(k)- x^{\left(m_2\right)}(k)\right\|\le R< \infty.    
\end{align}
\end{itemize} 
Then, there exists some $D>0$ such that for all $x(0)$ and $\epsilon$, 
\begin{equation}
\lim\limits_{k\to\infty}\mathbb{P}\left[ 
\mathrm{dist}\left(x(k),\bar{X}(k)\right)\le D+\epsilon \right] = 1
\end{equation}
where 
\begin{equation}\label{eq:trajec-hull}
\bar X(k) := \left\{\sum\limits_{m\in \mathbb{M}} \alpha_m  x^{\left(m\right)}(k):\, \sum\limits_{m\in \mathbb{M}} \alpha_m=1\right\}
\end{equation}
is the convex hull of all stable trajectories of $\{F_m\}$, $m\in\mathbb{M}$ and $\mathrm{dist}(y,\bar X)$ is
the projection onto $\bar X$ in the $L_2$ norm, i.e., $\min_{x\in\bar X} \|x-y\|$.
\end{thm}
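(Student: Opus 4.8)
The plan is to reduce the probabilistic claim to a deterministic scalar recursion. Write $\sigma(k)\in\mathbb{M}$ for the (random) index with $x(k+1)=F_{\sigma(k)}(x(k))$, and for $k\ge 1$ define $d(k):=\|x(k)-x^{(\sigma(k-1))}(k)\|$, the Euclidean distance from the state to the canonical trajectory of the map that produced it (for $k=0$ set $d(0):=\|x(0)-x^{(m_0)}(0)\|$ for an arbitrary fixed $m_0$, which is finite since $x(0)\in\mathcal{X}$). Since $x^{(m)}(k)\in\bar X(k)$ for every $m$ — it is the point of \eqref{eq:trajec-hull} obtained by concentrating all weight at $m$ — we have $\mathrm{dist}(x(k),\bar X(k))\le d(k)$, so it suffices to bound $d(k)$.

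First I would upgrade hypothesis~(i) from an infinitesimal to a finite-increment statement. Hypothesis~(i) is a uniform (Lohmiller--Slotine) contraction condition for each $F_m$ in the metric $M^{(m)}$; using the fact that uniform contraction implies global exponential convergence of any two solutions \cite[Theorem 1, Theorem 3]{Slotine1998}, one integrates \eqref{eq:stable-trajec} along the segment from $x^{(m)}(k)$ to an arbitrary $x\in\mathcal{X}$ (legitimate once $\mathcal{X}$ is taken convex and positively invariant under every $F_m$, so the segment and its image remain where \eqref{eq:stable-trajec} holds) and uses $x^{(m)}(k+1)=F_m(x^{(m)}(k))$ to obtain $\|F_m(x)-x^{(m)}(k+1)\|_{M^{(m)}}\le\sqrt{1-\beta}\,\|x-x^{(m)}(k)\|_{M^{(m)}}$. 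Converting to the Euclidean norm costs the condition number $\kappa_m:=\sqrt{\lambda_{\max}(M^{(m)})/\lambda_{\min}(M^{(m)})}$; as $\mathbb{M}$ is finite, $\kappa:=\max_m\kappa_m<\infty$, and $\gamma:=\kappa\sqrt{1-\beta}$ then satisfies $\|F_m(x)-x^{(m)}(k+1)\|\le\gamma\|x-x^{(m)}(k)\|$ for all $m$ and all $x\in\mathcal{X}$.

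Next I would assemble the recursion. By the triangle inequality and hypothesis~(ii),
\begin{align*}
\|x(k)-x^{(\sigma(k))}(k)\| &\le \|x(k)-x^{(\sigma(k-1))}(k)\| + \|x^{(\sigma(k-1))}(k)-x^{(\sigma(k))}(k)\| \\
&\le d(k)+R ,
\end{align*}
so, applying the finite-increment bound with $m=\sigma(k)$,
\begin{align*}
d(k+1) = \|F_{\sigma(k)}(x(k))-x^{(\sigma(k))}(k+1)\| \le \gamma\bigl(d(k)+R\bigr).
\end{align*}
This holds for every realisation of $\sigma$ (with the obvious modification at $k=0$). Provided $\gamma<1$, iterating gives $\limsup_{k\to\infty}d(k)\le \gamma R/(1-\gamma)=:D$ along every sample path, uniformly in $x(0)\in\mathcal{X}$; hence for any $\epsilon>0$ there is $K$ with $\mathrm{dist}(x(k),\bar X(k))\le d(k)\le D+\epsilon$ for all $k\ge K$, so $\mathbb{P}[\mathrm{dist}(x(k),\bar X(k))\le D+\epsilon]=1$ for $k\ge K$ and the stated limit follows (in fact in the stronger almost-sure form).

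The hard part will be the metric bookkeeping buried in $\gamma$: \eqref{eq:stable-trajec} only gives contraction in the map-dependent metric $M^{(m)}$, so alternating maps between steps forces the factor $\kappa$, and the whole argument needs $\gamma=\kappa\sqrt{1-\beta}<1$ — i.e.\ $\beta$ must dominate the worst-case distortion of the family $\{M^{(m)}\}_{m\in\mathbb{M}}$. This is automatic with a common metric ($\kappa=1$, $\gamma=\sqrt{1-\beta}$) and otherwise should be read as part of hypothesis~(i); one must also check that each $M^{(m)}$ is $x$-independent (or only slowly varying in $k$, as in \cite{Slotine1998}) so the path integral in the finite-increment step is valid, and that $\mathcal{X}$ is convex and positively invariant so that all displacements used stay inside it.
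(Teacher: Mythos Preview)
Your proof follows essentially the same route as the paper's: derive a one-step contraction $\|F_m(x)-x^{(m)}(k+1)\|\le\lambda\|x-x^{(m)}(k)\|$ from assumption~(i) via Lohmiller--Slotine, combine it with the diameter bound~(ii) through the triangle inequality to obtain a scalar recursion $a_{k+1}\le\lambda(a_k+R)$, and iterate to $D=\lambda R/(1-\lambda)$. The only differences are bookkeeping --- the paper tracks $\mathrm{dist}(x(k),\bar X(k))$ directly rather than your auxiliary $d(k)$, and it simply asserts the existence of $\lambda_m<1$ in the Euclidean norm without making your condition-number caveat $\gamma=\kappa\sqrt{1-\beta}<1$ explicit, a subtlety you are right to flag.
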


\begin{remark}
Following \cite[Section 5]{Slotine1998} the condition (i) above
ensures that all neighbouring trajectories converge, i.e., the system dynamics exhibit global exponential convergence to a single trajectory.
\end{remark}
\begin{proof}
Consider a set of trajectories $\{\tilde x^{\left(m\right)}(k)\}$ generated by repeated application of the map $F_m(\cdot)$. 
By (i) and \cite[Theorem 3]{Slotine1998}, we have that any potential trajectory
\begin{align*}
\tilde x^{\left(m\right)}(k+1)=F_m(\tilde x^{\left(m\right)}(k))    
\end{align*}
starting in $\mathcal{X}$ converges exponentially to $x^{(m)}(k)$. Thus, there exists a $\lambda_m<1$ such that
\begin{align}\label{eq:expo-convrg-1}
\left\|x^{\left(m\right)}(k+1)-\tilde x^{\left(m\right)}(k+1)\right\|\le \lambda_m \left\|x^{\left(m\right)}(k)-\tilde x^{\left(m\right)}(k)\right\|    
\end{align}
and for any $x$, 
\begin{align}\label{eq:expo-convrg-2}
\left\|x^{\left(m\right)}(k+1)-F_m(x)\right\|\le \lambda_m\left \|x^{\left(m\right)}(k)-x\right\|.  
\end{align}
Since $\mathbb{M}$ is finite, there exists $\lambda=\max_m \lambda_m <1$. 

Consider any general trajectory $x(k)$ generated by the IRF $\{F_{\sigma_k}(\cdot)\}$. Note that,
\[
\left\|F_{\sigma_k}(x(k))-x^{\left(\sigma_k\right)}(k+1)\right\| 
\le \lambda \left\|x(k)-x^{\left(\sigma_k\right)}(k)\right\|,
\]
and so, since $x(k+1)=F_{\sigma_k}(x(k))$ and now using (ii),
\[
\begin{array}{l}
\mathrm{dist}\left(x(k+1),\bar X(k+1)\right) \le
\left\|F_{\sigma_k}(x(k))-x^{\left(\sigma_k\right)}(k+1)\right\| \\ \quad \le
\lambda \left\|x(k)-x^{\left(\sigma_k\right)}(k)\right\| \le 
\lambda \left(\mathrm{dist}(x(k),\bar{X}(k))+R\right)
\\ \quad \le  \lambda\hspace{0.05 cm} \mathrm{dist}\left(x(k),\bar{X}(k)\right)+\lambda R, 
\end{array}
\]
which now implies, for all possible IRF realizations $\{\sigma_k\}$,
\[
\begin{array}{l}
\mathrm{dist}\left(x(k+1),\bar X(k+1)\right) 
\\ \quad \le \frac{\lambda R}{(1-\lambda)}+\left(\mathrm{dist}(x(0),\bar{X}(0))-\frac{\lambda R}{(1-\lambda)}\right)\lambda^k,
\end{array}
\]
and the Theorem holds with $D=\frac{\lambda R}{(1-\lambda)}$.
\end{proof}
\section{Illustrations and Discussion}

Let us now return to the example of the control of the ensemble mentioned in the introduction. 
There \cite{Schlote2014,crisostomi2014plug}, we have a fleet of plug-in hybrid vehicles. The vehicles randomly select a mode of operation (electric, non-electric) at each time step based on a feedback signal (pollution levels). 
PI control is commonly used to regulate the aggregate air pollution emissions produced by the fleet, turning the internal combustion engines on and off.
The filter considered is also non-linear, considering the maximum of the emissions recorded within a time window. This filter \cite{marevcek2016r} is clearly non-linear, and only marginally stable in the realization of \cite[Step 2 of the proof of Theorem 1]{marevcek2016r}.  
The negative results of \cite[Theorem 6]{ErgodicControlAutomatica} could be readily extended to show that the system lacks unique ergodicity.
The positive results of \cite[Theorem 16 ]{ErgodicControlAutomatica} are restricted to stable linear filters, however, and hence do not apply.
When we apply the lag approximant of the PI control, it is easy to see that conditions of Theorem \ref{thm:unified-stability} are satisfied. 
Notably, 
Condition (i) is satisfied, when we consider some proportion of the use of the internal combustion engine within a unit of time. (See \cite[Theorem 18]{ErgodicControlAutomatica} for  $F_m$ that are not continuous.)
Condition (iv-b) is satisfied by the cascade of the stable lag approximant and marginally stable filter.

In the margin, let us mention that one could also use incremental input to state stability, as suggested by   \cite{marecek2021predictability}.

We believe our results may be of use in the context of many further smart-city applications, for a number of reasons. Often, in such applications we often use a feedback signal (sometimes a price) to regulate access to a constrained resource of an ensemble of agents that (often) make binary decisions. In many such applications one employs an algorithm such as a PI control to regulate the price to orchestrate the behaviour of the ensemble towards some desirable outcome. Examples of such feedback loops can be readily found in the literature. For example in \cite{Griggs2016} agents stochastically bid for access to priority parking in a parking lot. In this application the feedback signal regulates the occupancy of the car park. Non-linearity may arise in the manner in which occupancy is filtered, or in order allocate access in a ``fair'' manner. 
Other examples include  transmission losses in power-systems applications \cite{marecek2021predictability} and the
Bureau of Public Roads (BPR) functions \cite{marevcek2016r}, 
which relate the travel time to the traffic flow in transportation research.

\section{Conclusions}
We have shown conditions under which non-linear filters allow for unique ergodicity within the control of ensembles. This has numerous applications in fields where the filters model non-linear physics of some underlying systems, such as water, transportation, and power systems. 

In a companion paper \cite{marecek2021predictability}, we show one more condition. In particular, in \cite[Section V, Theorem 2]{marecek2021predictability}, we show that for every incrementally input-to-state stable \cite{angeli2002lyapunov} controller and filter compatible with the feedback structure of Figure \ref{fig:system}, the feedback loop has a unique, attractive invariant measure and hence the system is uniquely ergodic.
Further conditions are left as a subject of further work. One could, for instance, consider further conditions following \cite{jiang2002converse} and using the arguments of ~\cite{tran2018convergence}, which have shown that certain stability and contraction properties are equivalent, or using conditions of \cite{Dragicevic}.

\paragraph{Acknowledgements}                               
R. Ghosh and R. Shorten were supported by Science Foundation Ireland grant 16/IA/4610.
V. Kungurtsev and J. Marecek have been supported by OP VVV project CZ.02.1.01/0.0/0.0/16\_019/0000765 ``Research Center for Informatics''.  
This work has received funding from the European Union’s Horizon Europe research and innovation programme under grant agreement no. 101070568.

\bibliographystyle{plain}        
\bibliography{autosam}           

\begin{thebibliography}{10}

\bibitem{angeli2002lyapunov}
David Angeli.
\newblock A lyapunov approach to incremental stability properties.
\newblock {\em IEEE Transactions on Automatic Control}, 47(3):410--421, 2002.

\bibitem{664150}
M.S. Branicky.
\newblock Multiple lyapunov functions and other analysis tools for switched and
  hybrid systems.
\newblock {\em IEEE Transactions on Automatic Control}, 43(4):475--482, 1998.

\bibitem{crisostomi2014plug}
Emanuele Crisostomi, Mingming Liu, Marco Raugi, and Robert Shorten.
\newblock Plug-and-play distributed algorithms for optimized power generation
  in a microgrid.
\newblock {\em IEEE transactions on Smart grid}, 5(4):2145--2154, 2014.

\bibitem{crisostomi2017electric}
Emanuele Crisostomi, Robert Shorten, Sonja St{\"u}dli, and Fabian Wirth.
\newblock {\em Electric and plug-in hybrid vehicle networks: optimization and
  control}.
\newblock CRC Press, 2017.

\bibitem{Diaconis1999}
P.~Diaconis and D.~Freedman.
\newblock Iterated random functions.
\newblock {\em SIAM Review}, 41(1):45--76, 1999.

\bibitem{Dragicevic}
Davor Dragi{\v c}evi{\'c}, Adina~Lumini{\c t}a Sasu, and Bogdan Sasu.
\newblock On stability of discrete dynamical systems: From global methods to
  ergodic theory approaches.
\newblock {\em Journal of Dynamics and Differential Equations}, 2020.

\bibitem{ErgodicControlAutomatica}
A.~R. Fioravanti, J.~Mare{\v c}ek, R.~N. Shorten, M.~Souza, and F.~R. Wirth.
\newblock On the ergodic control of ensembles.
\newblock {\em Automatica}, 108:108483, 2019.

\bibitem{Griggs2016}
Wynita Griggs, Jia~Yuan Yu, Fabian Wirth, Florian Häusler, and Robert Shorten.
\newblock On the design of campus parking systems with qos guarantees.
\newblock {\em IEEE Transactions on Intelligent Transportation Systems},
  17(5):1428--1437, 2016.

\bibitem{griggs2021unique}
Wynita~M Griggs, Ramen Ghosh, Jakub Marecek, and Robert~N Shorten.
\newblock Unique ergodicity in the interconnections of ensembles with
  applications to two-sided markets.
\newblock {\em arXiv preprint arXiv:2104.14858}, 2021.

\bibitem{jiang2002converse}
Zhong-Ping Jiang and Yuan Wang.
\newblock A converse lyapunov theorem for discrete-time systems with
  disturbances.
\newblock {\em Systems \& control letters}, 45(1):49--58, 2002.

\bibitem{Kakutani1941}
Shizuo Kakutani.
\newblock {A generalization of Brouwer's fixed point theorem}.
\newblock {\em Duke Mathematical Journal}, 8(3):457 -- 459, 1941.

\bibitem{Slotine1998}
Winfried Lohmiller and Jean-Jacques~E Slotine.
\newblock On contraction analysis for non-linear systems.
\newblock {\em Automatica}, 34(6):683--696, 1998.

\bibitem{marecek2021predictability}
Jakub Marecek, Michal Roubalik, Ramen Ghosh, Robert~N Shorten, and Fabian~R
  Wirth.
\newblock Predictability and fairness in load aggregation and operations of
  virtual power plants.
\newblock {\em Automatica}, 2022, accepted.
\newblock arXiv preprint arXiv:2110.03001.

\bibitem{marevcek2016r}
Jakub Mare{\v{c}}ek, Robert Shorten, and Jia~Yuan Yu.
\newblock r-extreme signalling for congestion control.
\newblock {\em International Journal of Control}, 89(10):1972--1984, 2016.

\bibitem{meyn2009markov}
S.~Meyn and R.L. Tweedie.
\newblock {\em Markov Chains and Stochastic Stability}.
\newblock Cambridge Mathematical Library. Cambridge University Press, 2009.

\bibitem{meyn1989ergodic}
Sean~P Meyn.
\newblock Ergodic theorems for discrete time stochastic systems using a
  stochastic lyapunov function.
\newblock {\em SIAM Journal on Control and Optimization}, 27(6):1409--1439,
  1989.

\bibitem{Schlote2014}
Arieh Schlote, Florian Häusler, Thomas Hecker, Astrid Bergmann, Emanuele
  Crisostomi, Ilja Radusch, and Robert Shorten.
\newblock Cooperative regulation and trading of emissions using plug-in hybrid
  vehicles.
\newblock {\em IEEE Transactions on Intelligent Transportation Systems},
  14(4):1572--1585, 2013.

\bibitem{tran2018convergence}
Duc~N Tran, Bj{\"o}rn~S R{\"u}ffer, and Christopher~M Kellett.
\newblock Convergence properties for discrete-time nonlinear systems.
\newblock {\em IEEE Transactions on Automatic Control}, 64(8):3415--3422, 2018.

\bibitem{Villani2009}
C{\'e}dric Villani.
\newblock {\em Optimal transport: old and new}, volume 338 of {\em Grundlehren
  der mathematischen Wissenschaften}.
\newblock Springer-Verlag, Berlin Heidelberg, 2009.

\bibitem{8814786}
Jieqiang Wei, Ehsan Nekouei, Junfeng Wu, Vladimir Cvetkovic, and Karl~H.
  Johansson.
\newblock Steady-state analysis of a human-social behavior model: a
  neural-cognition perspective.
\newblock In {\em 2019 American Control Conference (ACC)}, pages 199--204,
  2019.

\bibitem{Quan2022}
Quan Zhou, Ramen Ghosh, Robert Shorten, and Jakub Marecek.
\newblock Closed-loop view of the regulation of ai: Equal impact across
  repeated interactions, 2022.
\newblock arXiv preprint arXiv:2209.01410.

\end{thebibliography}

                                        
\clearpage
\appendix

\end{document}